\newenvironment{proof}{{\noindent \it Proof.}}{\hfill $\blacksquare$\par}
\newtheorem{theorem}{Theorem}[section]
\newtheorem{lemma}[theorem]{Lemma}
\newtheorem{corollary}[theorem]{\rm\bfseries Corollary}
\begin{document}

\vspace*{10mm}

\noindent
{\Large \bf Proof of an open problem on the Sombor index}

\vspace{7mm}

\noindent
{\large \bf Hechao Liu}

\vspace{7mm}

\noindent
School of Mathematical Sciences, South China Normal University,  Guangzhou, 510631, P. R. China,
e-mail: {\tt hechaoliu@m.scnu.edu.cn} \\[2mm]

\vspace{7mm}

\noindent
Received 18 August 2022

\vspace{10mm}

\noindent
{\bf Abstract} \
The Sombor index is one of the geometry-based descriptors, which was defined as
$$SO(G)=\sum_{uv\in E(G)}\sqrt{d^{2}(u)+d^{2}(v)},$$
where $d(u)$ (resp. $d(v)$) denotes the degree of vertex $u$ (resp. $v$) in $G$.

In this note, we determine the maximum and minimum graphs with respect to the Sombor index among the set of graphs with vertex connectivity (resp. edge connectivity) at most $k$, which solves an open problem on the Sombor index proposed by Hayat and Rehman [On Sombor index of graphs with a given number of cut-vertices, {\it MATCH Commun. Math. Comput. Chem.\/} {\bf 89} (2023) 437--450]. For some of the conclusions of the above paper, we give some counterexamples.
At last, we give the QSPR analysis with regression modeling and Sombor index.

\vspace{5mm}

\noindent
{\bf Keywords} \ Sombor index, vertex connectivity, edge connectivity.

\noindent
\textbf{Mathematics Subject Classification:} 05C07, 05C09, 05C92

\baselineskip=0.30in

\section{Introduction}
\hskip 0.6cm
Let $G$ be a simple undirected connected graph with vertex set $V(G)$ and edge set $E(G)$.
%We use $|U|$ to denote the cardinality of set $U$.
Let $N_{G}(u)$ (or $N(u)$ for short) be the neighbor of vertex $u$ in $G$, then the degree of vertex $u$ is $d(u)=|N(u)|$.

Let $uv\notin E(G)$ (resp. $uv\in E(G)$), then $G+uv$ (resp. $G-uv$) denotes the graphs obtained from $G$ by adding (resp. deleting) the edge $uv$.
Denote by $C_{n}$, $K_{n}$, $S_{n}$, $P_{n}$, the cycle, complete graph, star graph, path with order $n$, respectively.
In this paper, all notations and terminologies used but not defined can refer to Bondy and Murty \cite{bond2008}.

One of the geometry-based indices, Sombor index \cite{gumn2021} was defined as
$$SO(G)=\sum_{uv\in E(G)}\sqrt{d^{2}(u)+d^{2}(v)},$$
where $d(u)$ (resp. $d(v)$) denotes the degree of vertex $u$ (resp. $v$) in $G$.
One can refer to \cite{rirm2021,chli2021,dengt2021,giva2021,guma2021,lizh2022} for more details about Sombor index.

We call a graph $G$ is $k$-connected if $G\cong K_{k+1}$, or $|V(G)|\geq k+2$ and $G$ has no $(k-1)$-vertex cut.
We all a graph $G$ is $k$-edge-connected $(k\geq 1)$ if $|V(G)|\geq 2$ and $G$ has no $(k-1)$-edge cut.
Denote by $\kappa(G)=\max\{k| \text{ $G$ is $k$-connected}\}$ the connectivity of connected graph $G$.
Denote by $\kappa'(G)=\max\{k| \text{ $G$ is $k$-edge-connected}\}$ the edge connectivity of connected graph $G$. Then $\kappa(G)\leq \kappa'(G)\leq n-1$ and $\kappa(G)=n-1\Leftrightarrow \kappa'(G)=n-1\Leftrightarrow G\cong K_{n}$.

Let $\mathbb{V}_{n}^{k}$ (resp. $\mathbb{E}_{n}^{k}$) be the set of graphs with $n$ vertices and $\kappa(G)\leq k\leq n-1$ (resp. $\kappa'(G)\leq k\leq n-1$).
Suppose $G_{1}$ and $G_{2}$ are two disjoint graphs. Let $G_{1}\vee G_{2}$ be the graph obtained from $G_{1}\cup G_{2}$ by adding edges between any vertex of $G_{1}$ and any vertex of $G_{2}$.
Let $K_{n}^{k}\triangleq K_{1}\vee K_{k}\vee K_{n-k-1}$.

The remainder of this paper is organized as follow. In Section 2, we determine the maximum and minimum graphs with respect to the Sombor index among the set of graphs with vertex connectivity (resp. edge connectivity) at most $k$.
In Section 3, we give some counterexamples about some results of \cite{hare2023}.
In Section 3, we give the QSPR analysis with regression modeling and Sombor index.

\section{Extremal graphs with connectivity at most $k$}
\hskip 0.6cm
In one recently published paper \cite{hare2023}, Hayat and Rehman proposed the open problem that considering the maximum and minimum graphs with respect to the Sombor index among the set of graphs with vertex connectivity (resp. edge connectivity) at most $k$. In the section, we completely solve the problem.

\begin{lemma}\label{l2-1}{\rm\cite{gumn2021}\rm}
Let $T$ be a tree with $n$ vertices. Then
$$
SO(P_{n})\leq SO(T)\leq SO(S_{n})
$$
with equality if and only if $T\cong P_{n}$ or $T\cong S_{n}$.
\end{lemma}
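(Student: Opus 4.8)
Since $P_n\cong S_n$ for $n\le 3$, I would assume $n\ge 4$ and first record the target values $SO(P_n)=2\sqrt5+2(n-3)\sqrt2$ and $SO(S_n)=(n-1)\sqrt{(n-1)^2+1}$, so the equality cases can be read off at the end. The plan for both bounds is the standard one for bond‑incident‑degree invariants: exhibit a single local graph transformation that strictly increases (resp. decreases) $SO$ and, applied repeatedly, carries an arbitrary tree to $S_n$ (resp. $P_n$). Throughout I would use only the elementary monotonicity of $f(x,y)=\sqrt{x^2+y^2}$ in each coordinate.

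For the upper bound, since $T\not\cong S_n$ has at least two non‑leaf vertices and the non‑leaf vertices of a tree induce a subtree, I can pick two \emph{adjacent} non‑leaf vertices $u,v$ with $d(u)\ge 2$. Writing $N(v)\setminus\{u\}=\{w_1,\dots,w_s\}$ (so $s\ge 1$), form $T'$ by deleting $vw_1,\dots,vw_s$ and adding $uw_1,\dots,uw_s$; then $T'$ is again a tree, $v$ becomes a leaf, and the number of non‑leaf vertices drops by one. Every edge incident to $u$ now has an endpoint of larger degree, so its term grows; each reattached edge replaces $\sqrt{(s+1)^2+d^2(w_i)}$ by the strictly larger $\sqrt{(d(u)+s)^2+d^2(w_i)}$ because $d(u)\ge 2$; and the edge $uv$ changes by $(d(u)+s)^2+1-d^2(u)-(s+1)^2=2s(d(u)-1)\ge 0$. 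Hence $SO(T')>SO(T)$, and iterating reaches $S_n$.

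For the lower bound I would use the complementary operation. Rooting $T$ at a leaf and taking a \emph{deepest} branch vertex $v$ guarantees, when $T\not\cong P_n$, that $v$ has degree $k\ge 3$ and that at least two pendant paths $vx_1\cdots x_p$ and $vy_1\cdots y_q$ hang from it. Detaching the second from $v$ and appending it at the end $x_p$ of the first yields $T'$ in which $d(v)$ drops to $k-1$ while $x_p$ rises from degree $1$ to $2$. All $k-1$ edges kept at $v$ lose contribution, and the edge $vy_1$ (term $\sqrt{k^2+d^2(y_1)}$) is replaced by $x_py_1$ (term $\sqrt{4+d^2(y_1)}$), which is smaller since $k\ge 3$; when $p=1$ these are all the changes and the argument is immediate. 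The crux — and the step I expect to be the main obstacle — is the case $p\ge 2$, where the single edge $x_{p-1}x_p$ (with $d(x_{p-1})=2$) grows: one must verify that this lone gain is dominated by the loss on $vy_1$, i.e. that $\sqrt{k^2+d^2(y_1)}-\sqrt{4+d^2(y_1)}\ge 2\sqrt2-\sqrt5$ for the admissible values $k\ge 3$ and $d(y_1)\in\{1,2\}$. A short case check (worst case $k=3$, $d(y_1)=2$, giving $\sqrt{13}-2\sqrt2\approx 0.778>0.592$) confirms it, so $SO(T')<SO(T)$.

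Finally, the lower‑bound transformation strictly decreases $\sum_w\max(d(w)-2,0)$ by one and the upper‑bound transformation strictly decreases the number of non‑leaf vertices, so both processes terminate after finitely many steps at $P_n$ and $S_n$. Since each admissible step strictly changes $SO$, any tree other than the relevant extremal one can be strictly improved, which forces equality to hold exactly when $T\cong P_n$ or $T\cong S_n$, settling the equality cases.
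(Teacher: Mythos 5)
The paper does not prove this lemma at all: it is imported verbatim from \cite{gumn2021} and used as a black box, so there is no internal proof to compare against. Your argument is a correct, self-contained proof by the standard pair of local transformations (shifting all of one internal vertex's branches onto an adjacent internal vertex to increase $SO$, and relocating a pendant path from a deepest branch vertex to the tip of a sibling pendant path to decrease $SO$), which is essentially the approach used in the literature for such bond-incident-degree extremal results. The accounting is sound: in the upper bound, no neighbor of $u$ can coincide with a $w_i$ (that would create a triangle in a tree), so all unlisted degrees really are unchanged; and in the lower bound you correctly isolate the single term that increases ($x_{p-1}x_p$, gain $2\sqrt2-\sqrt5$) and dominate it by the loss on $vy_1$ in the worst admissible case $k=3$, $d(y_1)=2$ ($\sqrt{13}-2\sqrt2\approx0.777>0.592$), with the remaining $k-1\ge2$ edges at $v$ only reinforcing the strict decrease. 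The only step you wave at rather than write out is the $p=1$ subcase, where the edge $vx_p$ has \emph{both} endpoint degrees change, from $\sqrt{k^2+1}$ to $\sqrt{(k-1)^2+4}$; this is indeed a decrease since $k^2+1-\bigl((k-1)^2+4\bigr)=2k-4>0$ for $k\ge3$, but that one-line verification should appear explicitly. The termination monovariants and the derivation of the equality cases from strictness are both correct.
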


\begin{lemma}\label{l2-2}{\rm\cite{lyht2023}\rm}
Let $x>a\geq 1,y>0$, $f(x,y)=\sqrt{x^{2}+y^{2}}-\sqrt{(x-a)^{2}+y^{2}}$. Then
$f(x,y)$ is strictly increasing with $x$, strictly decreasing with $y$.
\end{lemma}

\begin{lemma}\label{l2-3}
If $u,v\in V(G)$ and $uv\notin E(G)$, then $SO(G)<SO(G+uv)$.
\end{lemma}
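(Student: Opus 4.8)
The plan is to compare $SO(G)$ and $SO(G+uv)$ term by term according to how adding the edge $uv$ changes the graph. Since $uv\notin E(G)$, passing from $G$ to $G+uv$ does exactly three things: it introduces one new edge $uv$; it increases $d(u)$ by one, which alters the summand of every edge incident to $u$; and it increases $d(v)$ by one, altering the summand of every edge incident to $v$. The degree of every other vertex, and hence the contribution of every edge incident to neither $u$ nor $v$, is unchanged, so those terms cancel in the difference.

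First I would make the bookkeeping precise. Because $uv\notin E(G)$, no edge of $G$ joins $u$ to $v$, so the two edge families $\{uw:w\in N_G(u)\}$ and $\{vw:w\in N_G(v)\}$ are disjoint, and $E(G+uv)=E(G)\cup\{uv\}$. Writing $d$ for degrees in $G$, this lets me express the difference as
$$ SO(G+uv)-SO(G)=\sqrt{(d(u)+1)^2+(d(v)+1)^2}+\Delta_u+\Delta_v, $$
where $\Delta_u=\sum_{w\in N_G(u)}\left(\sqrt{(d(u)+1)^2+d^2(w)}-\sqrt{d^2(u)+d^2(w)}\right)$ and $\Delta_v$ is defined analogously with $v$ in place of $u$. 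A common neighbour $w$ of $u$ and $v$ causes no trouble: the edges $uw$ and $vw$ are distinct, $w$ keeps its degree, and these two edges contribute separately to $\Delta_u$ and $\Delta_v$.

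It then remains to check positivity of each piece. The isolated term $\sqrt{(d(u)+1)^2+(d(v)+1)^2}$ is strictly positive. Each summand of $\Delta_u$ has the form $f(d(u)+1,d(w))$ with $a=1$ in the notation of Lemma~\ref{l2-2}; more elementarily, $(d(u)+1)^2>d^2(u)$ gives $\sqrt{(d(u)+1)^2+d^2(w)}>\sqrt{d^2(u)+d^2(w)}$, so every summand is positive (and if $d(u)=0$ then $N_G(u)=\emptyset$ and $\Delta_u=0$). Hence $\Delta_u\geq0$ and $\Delta_v\geq0$, and the strictly positive leading term forces $SO(G+uv)-SO(G)>0$. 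There is no genuine obstacle here: the only care needed is the edge-family bookkeeping and the degenerate degree-zero case, both of which are settled above.
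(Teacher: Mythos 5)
Your proof is correct. Note that the paper states Lemma~\ref{l2-3} without any proof at all, treating the monotonicity of $SO$ under edge addition as a known elementary fact; your argument is exactly the standard one that would fill this gap. The decomposition into the new edge's contribution plus the increments $\Delta_u$ and $\Delta_v$ over the edges incident to $u$ and to $v$ is sound, the observation that $uv\notin E(G)$ keeps the two edge families disjoint handles the bookkeeping, and strict positivity of the new-edge term $\sqrt{(d(u)+1)^2+(d(v)+1)^2}$ together with $\Delta_u,\Delta_v\geq 0$ gives the strict inequality. Your remark that each summand of $\Delta_u$ is positive by the elementary inequality $(d(u)+1)^2>d^2(u)$, without needing the two-variable monotonicity of Lemma~\ref{l2-2}, is the right level of generality here.
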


\begin{figure}[ht!]
  \centering
  \scalebox{.16}[.16]{\includegraphics{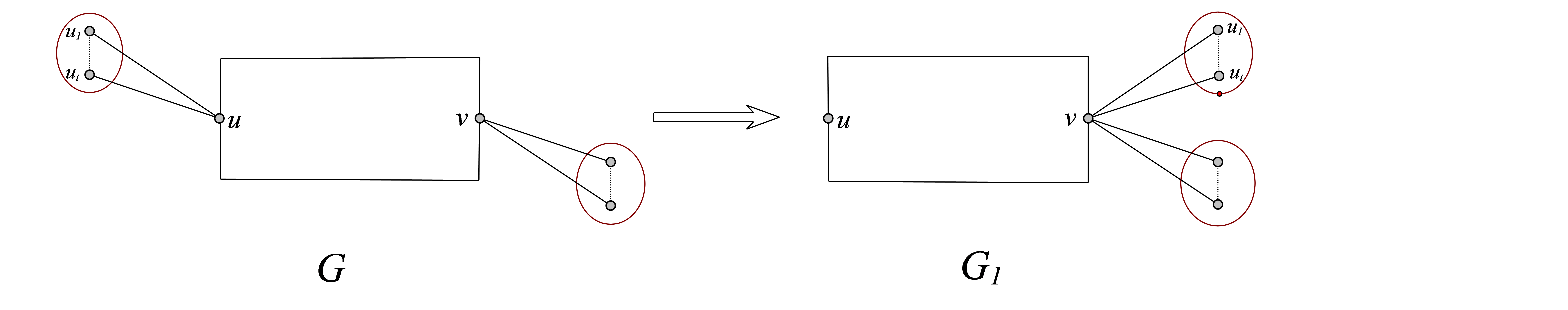}}
  \caption{The graphs $G$ and $G_{1}$ of Lemma \ref{l2-4}.}
 \label{fig-21}
\end{figure}

\begin{lemma}\label{l2-4}
Suppose $G$ is a connected graph, $u,v\in V(G)$, and $u_{1},u_{2},\cdots,u_{t}\in N_{G}(u)\setminus \{N_{G}(v)\cap N_{G}(u)\}$ for $1\leq t\leq d_{u}$ and $u_{1},u_{2},\cdots,u_{t}\notin P_{uv}$ where $P_{uv}$ is a path from $u$ to $v$. Let $G_{1}=G-\{uu_{1},uu_{2},\cdots,uu_{t}\}+\{vu_{1},vu_{2},\cdots,vu_{t}\}$ (see Figure \ref{fig-21}).
If $d_{u}\leq d_{v}$ and $uv\notin E(G)$, then $SO(G_{1})>SO(G)$.
\end{lemma}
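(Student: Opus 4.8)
The plan is to evaluate the difference $SO(G_1)-SO(G)$ directly, exploiting that the shift alters the degree of only two vertices: the degree of $u$ drops from $d_u$ to $d_u-t$, the degree of $v$ rises from $d_v$ to $d_v+t$, while every other vertex — each $u_i$ and every remaining neighbour of $u$ or $v$ — keeps its degree. (It is worth noting that one may reduce to $t=1$ by shifting the edges $uu_i$ one at a time: the path $P_{uv}$ avoids all of them, so connectivity is preserved at each step, and the hypothesis $d_u\le d_v$ only strengthens as the shift proceeds.) Since $uv\notin E(G)$, no edge is incident with both $u$ and $v$, so only the edges meeting $u$ or $v$ contribute to the difference. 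Writing $A=N_G(u)\setminus\{u_1,\dots,u_t\}$ and $B=N_G(v)$, I would split $SO(G_1)-SO(G)$ into the loss on the retained $u$-edges, $-\sum_{w\in A}\bigl(\sqrt{d_u^2+d_w^2}-\sqrt{(d_u-t)^2+d_w^2}\bigr)$, the gain on the old $v$-edges, $\sum_{x\in B}\bigl(\sqrt{(d_v+t)^2+d_x^2}-\sqrt{d_v^2+d_x^2}\bigr)$, and the gain on the $t$ shifted edges, $\sum_{i=1}^{t}\bigl(\sqrt{(d_v+t)^2+d_{u_i}^2}-\sqrt{d_u^2+d_{u_i}^2}\bigr)$.

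Next I would normalise these with the function $f(x,y)=\sqrt{x^2+y^2}-\sqrt{(x-t)^2+y^2}$ of Lemma~\ref{l2-2} (taking $a=t$); this is legitimate because the first edge of $P_{uv}$ is a neighbour of $u$ that is never shifted, so $A\neq\emptyset$ and hence $d_u>t\ge1$. With this notation each retained $u$-edge contributes $-f(d_u,d_w)$, each old $v$-edge contributes $f(d_v+t,d_x)$, and each shifted edge contributes at least $f(d_v+t,d_{u_i})$, since $\sqrt{d_v^2+d_{u_i}^2}-\sqrt{d_u^2+d_{u_i}^2}\ge0$ by $d_v\ge d_u$. Because $f$ is strictly increasing in its first argument and $d_v+t>d_u$, every common neighbour $c\in A\cap B$ yields $f(d_v+t,d_c)-f(d_u,d_c)>0$; I would cancel these matched pairs first, leaving only the private neighbours to control.

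After this cancellation, what remains is the private loss $\sum_{w\in A\setminus B}f(d_u,d_w)$ against the private and shifted gains $f(d_v+t,\cdot)$ indexed by $(B\setminus A)\cup\{u_1,\dots,u_t\}$, and a count shows the gains outnumber the losses by $d_v-d_u+2t$. When a loss index $w$ admits a gain index $x$ with $d_x\le d_w$, monotonicity gives $f(d_v+t,d_x)\ge f(d_v+t,d_w)>f(d_u,d_w)$, so that pair is positive and the surplus gains are pure profit. The main obstacle — and the step I expect to demand the most care — is the opposite regime, in which all of $u$'s private neighbours have much smaller degree than every neighbour of $v$: then a single loss term $f(d_u,d_w)$ can be close to $t$ while each gain $f(d_v+t,d_x)$ is arbitrarily small, so no favourable matching of the second arguments exists and Lemma~\ref{l2-2} by itself does not close the gap. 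Handling this case forces one to quantify the shifted-edge gains (whose first argument runs all the way from $d_u$ up to $d_v+t$) and to weigh them, together with the term surplus, against the total private loss; this is the crux of the argument, and it is precisely where the full strength of the hypotheses relating $d_u$, $d_v$ and the neighbourhood structure must be invoked. I would verify this regime with particular care, since a naive pairing can fail.
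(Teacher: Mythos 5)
You are right to include the loss term on the retained $u$-edges, $-\sum_{w\in A}\bigl(\sqrt{d_u^2+d_w^2}-\sqrt{(d_u-t)^2+d_w^2}\bigr)$ (the paper's displayed computation omits it entirely), and your bookkeeping up to the cancellation of common neighbours is correct. But the proposal stops exactly at the step that would constitute the proof, and that step cannot be completed: the ``opposite regime'' you flag is not merely delicate, it is where the statement, read literally (an arbitrary subset $\{u_1,\dots,u_t\}$ of $N_G(u)\setminus N_G(v)$ is shifted while other private neighbours of $u$ are retained), is false. Concretely, take $d_u=d_v=3$ with $N_G(u)=\{w_1,w_2,u_1\}$ and $N_G(v)=\{x_1,x_2,x_3\}$ disjoint, $w_2$ pendant, $w_1$ adjacent only to $u$ and $x_1$ (supplying the path $P_{uv}=uw_1x_1v$ that avoids $u_1$), and join each of $u_1,x_1,x_2,x_3$ to every vertex of a clique $K_{10}$. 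Shifting the single edge $uu_1$ to $vu_1$ produces losses $(\sqrt{13}-\sqrt{8})+(\sqrt{10}-\sqrt{5})\approx 1.703$ against gains $3(\sqrt{137}-\sqrt{130})+(\sqrt{160}-\sqrt{153})\approx 1.189$, so $SO(G_1)<SO(G)$. This realises precisely your heuristic: each loss approaches $t$ while each gain is $O(1/Y)$ for large clique size $Y$, so no matching or counting argument can close the gap.

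The paper's own proof avoids the issue by silently assuming $\{u_1,\dots,u_t\}=N_G(u)\setminus\bigl(N_G(u)\cap N_G(v)\bigr)$: its expression for $SO(G_1)-SO(G)$ contains no term for retained private $u$-edges and sums the shifted-edge gain over all of $M_1$. Under that extra hypothesis every retained $u$-edge goes to a common neighbour $c$, where the loss $f(d_u,d_c)$ is dominated by the paired gain $f(d_v+t,d_c)$ via Lemma \ref{l2-2}, exactly as in your cancellation step; the remaining terms are individually nonnegative, and the lemma holds. That is also the only case invoked in the application (Theorem \ref{l2-5}, where the shifted vertices are all of $V(K_i)\setminus\{u_1\}$ and the retained neighbours are the common neighbours in $H_k$). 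So the correct resolution is not to work harder in the hard regime but to strengthen the hypothesis to $A\subseteq N_G(v)$, i.e.\ the $u_i$ exhaust $N_G(u)\setminus N_G(v)$; with that added, your argument closes immediately and needs no further case analysis.
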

\begin{proof}
For convenience, let $M=N_{G}(v)\cap N_{G}(u)$, $M_{1}=N_{G}(u)\setminus \{N_{G}(v)\cap N_{G}(u)\}$, $M_{2}=N_{G}(v)\setminus \{N_{G}(v)\cap N_{G}(u)\}$.
By the definition of Sombor index and the structure of graphs $G$ and $G_{1}$, we have
\begin{align*}
SO(G_{1})-SO(G)=&  \sum_{v_{i}\in M_{2}}\sqrt{(d_{v}+t)^{2}+d_{v_{i}}^{2}}-\sum_{v_{i}\in M_{2}}\sqrt{d_{v}^{2}+d_{v_{i}}^{2}}\\
\quad & +\sum_{v_{i}\in M_{1}}\sqrt{(d_{v}+t)^{2}+d_{v_{i}}^{2}}-\sum_{v_{i}\in M_{1}}\sqrt{d_{u}^{2}+d_{v_{i}}^{2}}\\
\quad & +\left(\sum_{v_{i}\in M}\sqrt{(d_{v}+t)^{2}+d_{v_{i}}^{2}}-\sum_{v_{i}\in M}\sqrt{d_{v}^{2}+d_{v_{i}}^{2}}\right)\\
\quad & -\left(\sum_{v_{i}\in M}\sqrt{d_{u}^{2}+d_{v_{i}}^{2}}-\sum_{v_{i}\in M}\sqrt{(d_{u}-t)^{2}+d_{v_{i}}^{2}}\right)\\
> &  0. \ \ (\text{by Lemma \ref{l2-2} and $d_{u}\leq d_{v}$} ) &
\end{align*}

This completes the proof.
\end{proof}

Recalling taht $G_{1}\vee G_{2}$ is the graph obtained from $G_{1}\cup G_{2}$ by adding edges between any vertex of $G_{1}$ and any vertex of $G_{2}$.
\begin{theorem}\label{l2-5}
Suppose that $G(i,n-k-i)=K_{i}\vee H_{k}\vee K_{n-k-i}$ is the graph with $n$ vertices and $H_{k}$ is a graph with $k\geq 1$ vertices. Then $SO(G(i,n-k-i))<SO(G(1,n-k-1))$ for $2\leq i\leq \frac{n-k}{2}$.
\end{theorem}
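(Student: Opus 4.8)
The plan is to prove the statement in two steps: first I would apply Lemma~\ref{l2-4} to a single well-chosen pair of vertices, and then close the remaining gap with Lemma~\ref{l2-3}.

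First I would fix notation. Write the two complete parts as $K_i = \{u, x_1, \ldots, x_{i-1}\}$ and $K_{n-k-i} = \{v, y_1, \ldots, y_{n-k-i-1}\}$, recalling that in $G(i,n-k-i) = K_i \vee H_k \vee K_{n-k-i}$ the parts $K_i$ and $K_{n-k-i}$ are each joined to $H_k$ but not to one another. A direct degree count then gives $d_u = (i-1) + k = i + k - 1$ and $d_v = (n-k-i-1) + k = n - i - 1$, so the hypothesis $2 \le i \le \frac{n-k}{2}$ yields both $d_u \le d_v$ (equivalently $2i \le n-k$) and $n-k-i \ge i \ge 2$. The latter guarantees that the sets $\{x_1, \ldots, x_{i-1}\}$ and $\{y_1, \ldots, y_{n-k-i-1}\}$ are both nonempty, which will matter in the second step.

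Next I would apply Lemma~\ref{l2-4} to this $u$ and $v$, moving the $t = i-1$ neighbors $x_1, \ldots, x_{i-1}$ of $u$ over to $v$. These vertices lie in $N_G(u) \setminus (N_G(u) \cap N_G(v))$, since $v$ is adjacent to no vertex of $K_i$, and one may route the $u$-$v$ path through any vertex of the nonempty $H_k$ so that none of the $x_\ell$ lies on it; thus all hypotheses of Lemma~\ref{l2-4} hold and, because $d_u \le d_v$, the graph $G' = G(i,n-k-i) - \{ux_1, \ldots, ux_{i-1}\} + \{vx_1, \ldots, vx_{i-1}\}$ satisfies $SO(G') > SO(G(i,n-k-i))$. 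Finally I would identify $G'$ structurally: after the move, $u$ is adjacent only to $H_k$, while $v$ together with $\{x_1, \ldots, x_{i-1}\} \cup \{y_1, \ldots, y_{n-k-i-1}\}$ induces a graph in which every pair is adjacent except the pairs $x_\ell y_j$. In other words $G'$ is precisely $G(1,n-k-1)$ with the $(i-1)(n-k-i-1)$ edges $x_\ell y_j$ deleted, and this set is nonempty by the count above. Adding these edges back one at a time and invoking Lemma~\ref{l2-3} repeatedly gives $SO(G') < SO(G(1,n-k-1))$, and chaining the two inequalities yields $SO(G(i,n-k-i)) < SO(G(1,n-k-1))$.

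I expect the main obstacle to be recognizing that the single vertex-shifting operation of Lemma~\ref{l2-4} does not land directly on $G(1,n-k-1)$, but on a proper spanning subgraph missing exactly the cross edges $x_\ell y_j$; once this is seen, Lemma~\ref{l2-3} supplies the remaining strict increase. The only genuinely delicate bookkeeping is the degree condition $d_u \le d_v$ together with the nonemptiness of both vertex sets, both of which reduce to the single chain $2 \le i \le \frac{n-k}{2}$.
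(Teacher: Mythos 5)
Your proposal is correct and follows essentially the same route as the paper: one application of Lemma~\ref{l2-4} shifting the $i-1$ remaining vertices of $K_i$ onto a vertex of $K_{n-k-i}$, followed by repeated use of Lemma~\ref{l2-3} to restore the missing cross edges and reach $G(1,n-k-1)$. Your version is in fact somewhat more careful than the paper's, since you explicitly verify the degree inequality $d_u\le d_v$, the hypothesis $uv\notin E(G)$, the routing of the path through $H_k$, and the exact identification of the intermediate graph as $G(1,n-k-1)$ minus the $(i-1)(n-k-i-1)$ cross edges.
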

\begin{proof}
Suppose that $V(K_{i})=\{u_{1},u_{2},\cdots,u_{i}\}$, $V(K_{n-k-i})=\{v_{1},u_{2},\cdots,u_{n-k-i}\}$.
Since $2\leq i\leq \frac{n-k}{2}$, then $d_{u_{1}}\leq d_{v_{1}}$. By Lemma \ref{l2-4}, we have
$SO(G_{1})>SO(G(i,n-k-i))$, where $G_{1}=G-\{u_{1}u_{2},u_{1}u_{3},\cdots,u_{1}u_{i}\}+\{v_{1}u_{2},v_{1}u_{3},\cdots,v_{1}u_{i}\}$.
We can also know that $G(1,n-k-1)=G_{1}+\{v_{2}u_{2},v_{2}u_{3},\cdots,v_{2}u_{i}\}+\{v_{3}u_{2},v_{3}u_{3},\cdots,v_{3}u_{i}\}
+\cdots+\{v_{n-k-i}u_{2},v_{n-k-i}u_{3},\cdots,v_{n-k-i}u_{i}\}$. Then by Lemma \ref{l2-3}, we have
$SO(G_{1})<SO(G(1,n-k-1))$. Thus $SO(G(i,n-k-i))<SO(G(1,n-k-1))$ for $2\leq i\leq \frac{n-k}{2}$.

This completes the proof.
\end{proof}

\begin{theorem}\label{t2-6}
Let $G\in \mathbb{V}_{n}^{k}$. Then we have
$SO(G)\leq k\sqrt{k^{2}+(n-1)^{2}}+k(n-k-1)\sqrt{(n-1)^{2}+(n-2)^{2}}+\frac{\sqrt{2}}{2}k(k-1)(n-1)
+\frac{\sqrt{2}}{2}(n-k-1)(n-k-2)(n-2)$,
with equality if and only if $G\cong K_{n}^{k}$.
\end{theorem}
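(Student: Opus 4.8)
The plan is to identify $K_n^k = K_k \vee (K_1 \cup K_{n-k-1})$ as the unique edge-maximal graph in $\mathbb{V}_n^k$ and then to read the bound off its degree sequence. First I would fix a graph $G \in \mathbb{V}_n^k$ of maximum Sombor index. Since Lemma~\ref{l2-3} guarantees that inserting any missing edge strictly increases $SO$, such a $G$ must be \emph{edge-maximal} in $\mathbb{V}_n^k$, i.e.\ no non-edge of $G$ can be added while keeping $\kappa \le k$. Two elementary facts drive the argument: adding a single edge raises the connectivity by at most one, and if $S$ is a vertex cut of $G$ then $S$ remains a vertex cut after inserting any edge whose two endpoints lie inside one component of $G - S$ or inside $S$.

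Next I would pin down the structure of the edge-maximal $G$. If $\kappa(G) < k$ then, since $G \ne K_n$, some non-edge could be added, raising $\kappa$ by at most one and thus staying within $\mathbb{V}_n^k$, contradicting edge-maximality; hence $\kappa(G) = k$. Let $S$ be a minimum vertex cut with $|S| = k$, and let $C_1, \dots, C_c$ ($c \ge 2$) be the components of $G - S$. Using the second fact together with Lemma~\ref{l2-3}, I would argue in turn that (i) each $C_j$ induces a clique, (ii) $S$ induces a clique, and (iii) every vertex of $S$ is joined to every vertex of $G - S$; in each case a missing edge of the stated kind leaves $S$ a cut of size $k$, so it could be added, a contradiction. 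Finally, if $c \ge 3$ one could insert a missing edge between $C_1$ and $C_2$ while $S$ still separates $C_1 \cup C_2$ from $C_3$, so in fact $c = 2$. This forces $G = K_k \vee (K_a \cup K_b)$ with $a + b = n - k$ and $a, b \ge 1$.

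With the structure in hand, I would optimize the partition. Writing $G = K_k \vee (K_a \cup K_b)$ with $a \le b$, so $1 \le a \le \frac{n-k}{2}$, this is exactly the graph $G(a, n-k-a)$ of Theorem~\ref{l2-5} with $H_k = K_k$. That theorem gives $SO(G(a,n-k-a)) < SO(G(1,n-k-1))$ whenever $a \ge 2$, so the Sombor index is maximized precisely when $a = 1$, i.e.\ $G \cong K_k \vee (K_1 \cup K_{n-k-1}) = K_n^k$, yielding both the bound and the equality case. The remaining step is the routine evaluation of $SO(K_n^k)$: its degree sequence consists of the $k$ cut vertices of degree $n-1$, the single vertex of degree $k$ (the $K_1$), and the $n-k-1$ vertices of the large clique of degree $n-2$; summing $\sqrt{d^2(u)+d^2(v)}$ over the four edge classes ($K_k$ internal, $K_{n-k-1}$ internal, $K_1$–$K_k$, and $K_k$–$K_{n-k-1}$) produces the four displayed terms.

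The main obstacle is the structural characterization in the second paragraph: one must verify carefully that each local edge insertion really leaves a vertex cut of size $k$ intact, so that the enlarged graph stays in $\mathbb{V}_n^k$; the subtle point is the reduction to exactly two components, where the inserted edges are placed between two components yet $S$ continues to separate them from the rest. Once edge-maximality has been converted into the rigid $K_k \vee (K_a \cup K_b)$ shape, Theorem~\ref{l2-5} handles the remaining optimization and only the arithmetic of the final sum is left.
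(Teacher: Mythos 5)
Your proposal is correct and follows essentially the same route as the paper: take a maximizer, use Lemma~\ref{l2-3} to force edge-maximality, deduce that a $k$-vertex cut $W$ leaves exactly two components whose unions with $W$ are cliques, and then invoke Theorem~\ref{l2-5} to shrink one side to a single vertex, finishing with the evaluation of $SO(K_n^k)$. The only cosmetic differences are that you justify the existence of a $k$-cut via the fact that an edge raises connectivity by at most one, and you split the clique conditions into three local checks where the paper states them as two induced cliques; the substance is identical.
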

\begin{proof}
If $k=n-1$, then $K_{n}^{n-1}\cong K_{n}\in \mathbb{V}_{n}^{n-1}$, thus the $SO(G)\leq SO(K_{n}^{k})$ for $k= n-1$. Next we only consider the case $1\leq k\leq n-2$.

Suppose that $SO(G)\leq SO(G_{1})$ for any $G\in \mathbb{V}_{n}^{k}$ ($1\leq k\leq n-2$). Thus $G_{1}\ncong K_{n}$, then $G_{1}$ has a $k$-vertex cut that is $W=\{u_{1},u_{2},\cdots,u_{k}\}$.
Denote by $\omega(G)$ the number of connected components in graph $G$.

We first proof that $\omega(G_{1}-W)=2$. Otherwise, $\omega(G_{1}-W)\geq 3$. We suppose that $G_{1}-W=\{H_{1},H_{2},\cdots,H_{l}\}$ $(l\geq 3)$.
Let $u\in H_{i}$ and $v\in H_{j}$ where $i,j\in \{1,2,\cdots,l\}$ and $i\neq j$.
We find that $W$ is still a $k$-vertex cut of $G_{1}+uv$, i.e., $G_{1}+uv\in \mathbb{V}_{n}^{k}$.
Since $SO(G_{1}+uv)>SO(G_{1})$, which is a contradiction with that $SO(G)\leq SO(G_{1})$ for any $G\in \mathbb{V}_{n}^{k}$. Thus we have $\omega(G_{1}-W)=2$.

We suppose that $G_{1}-W=\{H_{1},H_{2}\}$.

Next we proof that $G_{1}[V(H_{1})\cup W]$ and $G_{1}[V(H_{2})\cup W]$ are all cliques.
Otherwise, $G_{1}[V(H_{1})\cup W]$ is not a clique. Then there exist vertices $u,v\in V(H_{1})\cup W$, such that $uv\notin E(G_{1}[V(H_{1})\cup W])$. Since $SO(G_{1}+uv)>SO(G_{1})$ and $G_{1}+uv\in \mathbb{V}_{n}^{k}$, which is a contradiction with that $SO(G)\leq SO(G_{1})$ for any $G\in \mathbb{V}_{n}^{k}$. Thus $G_{1}[V(H_{1})\cup W]$ is a clique. Similarly, we also have that $G_{1}[V(H_{2})\cup W]$ is a clique.

Since $G_{1}[V(H_{1})\cup W]$ and $G_{1}[V(H_{2})\cup W]$ are all cliques, then $H_{1}$ and $H_{2}$ are all cliques. For convenience, we denote $H_{1}$ (resp. $H_{2}$) as $K_{n_{1}}$ (resp. $K_{n_{2}})$.

At last we proof that $n_{1}=1$ or $n_{2}=1$. Otherwise, we have $n_{1}\geq 2$ and $n_{2}\geq 2$.
Suppose that $G_{2}=K_{1}\vee G_{1}[W]\vee K_{n-k-1}$, it is obvious that $G_{2}\in \mathbb{V}_{n}^{k}$. By the conclusion of Lemma \ref{l2-5}, we know that $SO(G_{1})<SO(G_{2})$, which is a contradiction with that $SO(G)\leq SO(G_{1})$ for any $G\in \mathbb{V}_{n}^{k}$. Thus $n_{1}=1$ or $n_{2}=1$.

By a simple calculation, we know that
$SO(G)\leq k\sqrt{k^{2}+(n-1)^{2}}+k(n-k-1)\sqrt{(n-1)^{2}+(n-2)^{2}}+\frac{\sqrt{2}}{2}k(k-1)(n-1)
+\frac{\sqrt{2}}{2}(n-k-1)(n-k-2)(n-2)$,
with equality if and only if $G\cong K_{n}^{k}$.

This completes the proof.
\end{proof}

\begin{corollary}\label{c2-7}
Let $G\in \mathbb{E}_{n}^{k}$. Then we have
$SO(G)\leq k\sqrt{k^{2}+(n-1)^{2}}+k(n-k-1)\sqrt{(n-1)^{2}+(n-2)^{2}}+\frac{\sqrt{2}}{2}k(k-1)(n-1)
+\frac{\sqrt{2}}{2}(n-k-1)(n-k-2)(n-2)$,
with equality if and only if $G\cong K_{n}^{k}$.
\end{corollary}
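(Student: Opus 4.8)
The plan is to deduce Corollary \ref{c2-7} directly from Theorem \ref{t2-6} by observing that the edge-connectivity family is contained in the vertex-connectivity family. Recall from the introduction the inequality $\kappa(G)\leq \kappa'(G)$, valid for every connected graph $G$. Consequently, if $G\in\mathbb{E}_{n}^{k}$, so that $\kappa'(G)\leq k$, then $\kappa(G)\leq\kappa'(G)\leq k$, whence $G\in\mathbb{V}_{n}^{k}$. This yields the inclusion $\mathbb{E}_{n}^{k}\subseteq\mathbb{V}_{n}^{k}$, which is the entire engine of the proof.

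First I would apply Theorem \ref{t2-6} to an arbitrary $G\in\mathbb{E}_{n}^{k}\subseteq\mathbb{V}_{n}^{k}$; this immediately gives the stated upper bound on $SO(G)$, since the bound in the theorem and in the corollary are literally identical. It then remains only to settle the equality clause. Because we have passed to a \emph{sub}family, Theorem \ref{t2-6} already guarantees that equality can hold only when $G\cong K_{n}^{k}$; the single extra thing to check is that this extremizer actually lies in the smaller class $\mathbb{E}_{n}^{k}$, so that the bound is still attained.

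To verify $K_{n}^{k}\in\mathbb{E}_{n}^{k}$, I would read off the degree sequence of the extremal graph from the structure identified in the proof of Theorem \ref{t2-6}: the single vertex on the $K_{1}$ side has degree $k$, the $k$ cut vertices each have degree $n-1$, and the $n-k-1$ vertices on the large side each have degree $n-2$. Hence the minimum degree $\delta(K_{n}^{k})=k$, and by the standard Whitney inequality $\kappa'(G)\leq\delta(G)$ we get $\kappa'(K_{n}^{k})\leq k$, i.e. $K_{n}^{k}\in\mathbb{E}_{n}^{k}$. Combining this with the previous paragraph shows that the upper bound is attained exactly when $G\cong K_{n}^{k}$, which completes the argument.

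The whole corollary is essentially immediate once the inclusion $\mathbb{E}_{n}^{k}\subseteq\mathbb{V}_{n}^{k}$ is noticed, so I do not expect any serious obstacle. The only point demanding a little care is the equality statement, where one must confirm that the vertex-connectivity extremizer $K_{n}^{k}$ survives the passage to the more restrictive edge-connectivity constraint; this is exactly what the computation $\delta(K_{n}^{k})=k$ secures. As a consistency check, substituting the degree sequence above into the definition of $SO$ reproduces precisely the four summands of the stated bound.
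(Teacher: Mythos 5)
Your proposal is correct and follows exactly the same route as the paper: the paper's proof is the one-line observation that $K_{n}^{k}\in \mathbb{E}_{n}^{k}\subseteq \mathbb{V}_{n}^{k}$, so Theorem \ref{t2-6} applies. You simply spell out the two ingredients the paper leaves implicit, namely that $\kappa(G)\leq\kappa'(G)$ gives the inclusion and that $\delta(K_{n}^{k})=k$ (equivalently, deleting the $k$ edges at the $K_{1}$-vertex disconnects the graph) places the extremizer in $\mathbb{E}_{n}^{k}$.
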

\begin{proof}
Since $K_{n}^{k}\in \mathbb{E}_{n}^{k}\subseteq \mathbb{V}_{n}^{k}$, then by Theorem \ref{t2-6}, the conclusion holds.
\end{proof}

\begin{theorem}\label{t2-8}
Let $G\in \mathbb{V}_{n}^{k}$ ($k\geq 1$) be a connected graph. Then we have
$SO(G)\geq 2\sqrt{2}(n-3)+2\sqrt{5}$,
with equality if and only if $G\cong P_{n}$.
\end{theorem}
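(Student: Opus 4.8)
The plan is to show that $P_n$ minimizes the Sombor index not merely over $\mathbb{V}_{n}^{k}$ but over the larger class of \emph{all} connected graphs on $n$ vertices, and then to observe that $P_n$ itself lies in $\mathbb{V}_{n}^{k}$. Since $\kappa(P_n)=1\le k$ for every $k\ge 1$, we have $P_n\in\mathbb{V}_{n}^{k}$; and a direct count of its $n-1$ edges (two pendant edges joining a degree-$1$ to a degree-$2$ vertex, contributing $\sqrt{5}$ each, and $n-3$ interior edges joining two degree-$2$ vertices, contributing $2\sqrt{2}$ each) gives $SO(P_n)=2\sqrt{5}+2\sqrt{2}\,(n-3)$, exactly the claimed bound. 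Because $\mathbb{V}_{n}^{k}$ is contained in the set of connected graphs on $n$ vertices, it suffices to prove $SO(G)\ge SO(P_n)$ for every connected $G$, with equality only for $P_n$.

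First I would reduce from arbitrary connected graphs to trees. Given a connected $G$ that is not a tree, it contains a cycle, so some edge $e=uv$ lying on that cycle can be deleted while keeping the graph connected; applying Lemma \ref{l2-3} to the graph $G-e$ (whose non-edge $uv$ is restored by adding $e$ back) yields $SO(G-e)<SO(G)$. Repeating this until no cycle remains produces a spanning tree $T$ of $G$ with $SO(T)\le SO(G)$, the inequality being strict unless $G$ was already a tree, in which case $T=G$.

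Next I would invoke Lemma \ref{l2-1}, which gives $SO(P_n)\le SO(T)$ for every tree $T$ on $n$ vertices, with equality precisely when $T\cong P_n$ (the star $S_n$ being the opposite extreme). Chaining the two inequalities yields $SO(P_n)\le SO(T)\le SO(G)$. For equality $SO(G)=SO(P_n)$ we need equality in both steps: $SO(T)=SO(G)$ forces $G$ to be a tree (no edge was deleted), while $SO(P_n)=SO(T)$ forces $T\cong P_n$; hence $G\cong P_n$.

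There is no serious obstacle here, as both strict inequalities are imported from Lemmas \ref{l2-1} and \ref{l2-3}. The only points requiring care are the bookkeeping of the equality cases and the observation that in the reduction the deleted edge can always be chosen to preserve connectivity, which is immediate since an edge lying on a cycle is never a bridge. I would not use the connectivity bound $\kappa(G)\le k$ in the inequality at all; it enters only through the trivial remark that the global connected minimizer $P_n$ survives inside the constrained class $\mathbb{V}_{n}^{k}$.
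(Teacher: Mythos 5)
Your proposal is correct and follows essentially the same route as the paper: delete cycle edges (each deletion strictly decreasing $SO$ by Lemma \ref{l2-3}) to reduce a minimizer to a spanning tree, then invoke Lemma \ref{l2-1} to conclude that $P_n$ is extremal. Your write-up is in fact somewhat more careful than the paper's, which only sketches this reduction; your added observations that the deleted edge can be chosen non-bridge and that the connectivity constraint plays no role beyond $P_n\in\mathbb{V}_n^k$ are accurate but do not change the argument.
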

\begin{proof}
Note that for $e=uv\in E(G)$ and $G\in \mathbb{V}_{n}^{k}$, then $G-uv\in \mathbb{V}_{n}^{k}$.
Thus if $G\in \mathbb{V}_{n}^{k}$ be a connected graph with minimum Sombor index, then $G$ must be a trees. Then by Lemma \ref{l2-1}, we have $P_{n}$ is the minimum connected graph in $\mathbb{V}_{n}^{k}$ with respect to Sombor index.
\end{proof}

\begin{corollary}\label{c2-9}
Let $G\in \mathbb{E}_{n}^{k}$ ($k\geq 1$) be a connected graph. Then we have
$SO(G)\geq 2\sqrt{2}(n-3)+2\sqrt{5}$,
with equality if and only if $G\cong P_{n}$.
\end{corollary}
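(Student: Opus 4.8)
The plan is to deduce this directly from Theorem \ref{t2-8} by exploiting the inclusion between the two graph classes, exactly as Corollary \ref{c2-7} was obtained from Theorem \ref{t2-6}. The crucial structural fact, already invoked in the proof of Corollary \ref{c2-7}, is that $\mathbb{E}_{n}^{k}\subseteq \mathbb{V}_{n}^{k}$: since $\kappa(G)\leq \kappa'(G)$ for every connected graph $G$, any graph satisfying $\kappa'(G)\leq k$ automatically satisfies $\kappa(G)\leq k$. Thus every connected $G\in \mathbb{E}_{n}^{k}$ is a connected member of $\mathbb{V}_{n}^{k}$ as well.

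First I would observe that, because of this inclusion, Theorem \ref{t2-8} applies verbatim to every connected $G\in \mathbb{E}_{n}^{k}$ and yields $SO(G)\geq 2\sqrt{2}(n-3)+2\sqrt{5}$. For a lower bound the direction of the inclusion is the favorable one: a bound that holds on the larger family $\mathbb{V}_{n}^{k}$ is inherited by every subfamily, so no separate analysis of edge cuts is needed. It then remains only to verify that the extremal graph $P_{n}$ actually lies in the smaller class $\mathbb{E}_{n}^{k}$. Since the path has edge connectivity $\kappa'(P_{n})=1$ and we assume $k\geq 1$, we indeed have $P_{n}\in \mathbb{E}_{n}^{k}$. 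Combining this with the equality characterization of Theorem \ref{t2-8} (equality precisely when $G\cong P_{n}$) delivers both the claimed minimum value and its uniqueness.

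The only point that genuinely requires attention—and the closest thing here to an obstacle—is confirming that the inclusion runs in the direction that preserves a lower bound, and that the unique minimizer $P_{n}$ survives the restriction to the smaller family. Both follow at once from $\kappa\leq \kappa'$ and $\kappa'(P_{n})=1$, so the corollary is obtained with essentially no additional computation beyond the citation of Theorem \ref{t2-8}.
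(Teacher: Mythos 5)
Your proposal is correct and follows exactly the paper's own route: the paper likewise deduces the corollary from Theorem \ref{t2-8} via the inclusion $P_{n}\in \mathbb{E}_{n}^{k}\subseteq \mathbb{V}_{n}^{k}$. Your version merely spells out the justification ($\kappa\leq\kappa'$ and $\kappa'(P_{n})=1$) in more detail.
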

\begin{proof}
Since $P_{n}\in \mathbb{E}_{n}^{k}\subseteq \mathbb{V}_{n}^{k}$, then by Theorem \ref{t2-8}, the conclusion holds.
\end{proof}

\section{Counterexamples}
\hskip 0.6cm
In one recently published paper \cite{hare2023}, there are some flaws in their Lemmas. We give some counterexamples about their results.

\begin{figure}[ht!]
  \centering
  \scalebox{.16}[.16]{\includegraphics{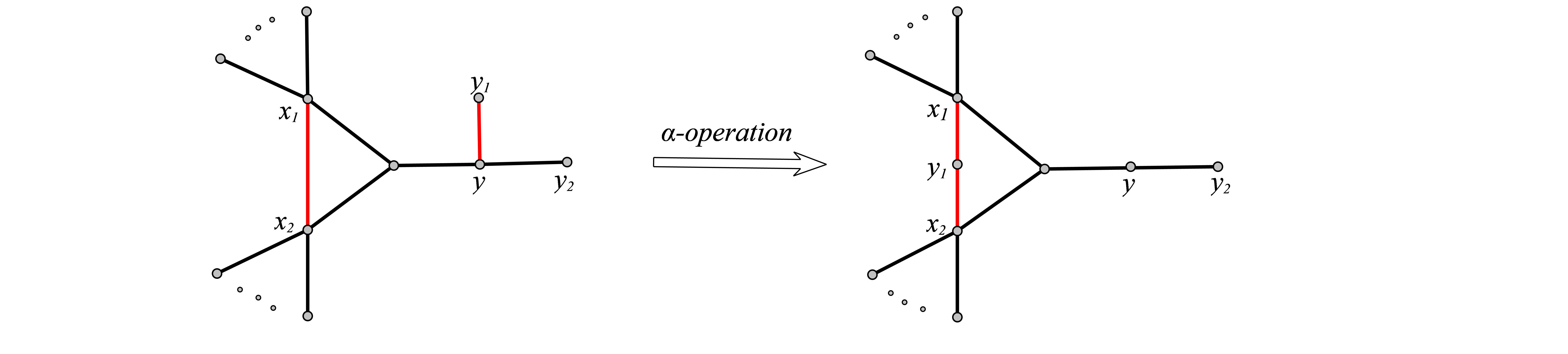}}
  \caption{A counterexamples of Lemma 3 of \cite{hare2023}.}
 \label{fig-31}
\end{figure}

\begin{lemma}\label{l3-1}{\rm\cite{hare2023}\rm}
Let $\Gamma_{\alpha}$ be the $\alpha$-switched graph of a graph $\Gamma$ with $n$ vertices and $k$ cut vertices (see Figure 1 of \cite{hare2023}). Then $SO(\Gamma_{\alpha})<SO(\Gamma)$.
\end{lemma}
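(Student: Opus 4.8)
The plan is to reduce $SO(\Gamma_{\alpha})-SO(\Gamma)$ to a finite sum over the edges whose endpoint degrees are altered by the $\alpha$-switching, since every edge joining two vertices of unchanged degree contributes nothing to the difference. First I would read off from the definition of the operation (Figure~1 of \cite{hare2023}) precisely which vertices gain and which lose degree; for a switch of this kind the natural expectation is that a fixed amount of adjacency is transferred from one distinguished vertex to another, so I would fix those two vertices and partition their neighbourhoods into common and exclusive parts, exactly as the sets $M$, $M_{1}$, $M_{2}$ were introduced in the proof of Lemma~\ref{l2-4}.

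Next I would expand the difference term by term. Writing the change as a sum of expressions of the shape $\sqrt{(d+a)^{2}+d_{w}^{2}}-\sqrt{d^{2}+d_{w}^{2}}$ at the vertex whose degree rises, and $\sqrt{d^{2}+d_{w}^{2}}-\sqrt{(d-a)^{2}+d_{w}^{2}}$ at the vertex whose degree falls, the sign of each individual summand is governed by Lemma~\ref{l2-2}: the auxiliary function $f(x,y)=\sqrt{x^{2}+y^{2}}-\sqrt{(x-a)^{2}+y^{2}}$ is increasing in $x$ and decreasing in $y$, so the magnitude of each contribution is pinned down once the two incident degrees are known.

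The crux, and the step I expect to be the main obstacle, is that this expansion necessarily produces contributions of opposite signs, so proving $SO(\Gamma_{\alpha})<SO(\Gamma)$ amounts to showing that the total gain at one vertex is strictly dominated by the total loss at the other. The only tool available for this comparison is the monotonicity in Lemma~\ref{l2-2}, and I would try to match each positive summand against a strictly larger negative one. I anticipate that such domination does not hold uniformly: whether it succeeds depends sensitively on the two base degrees and on how the neighbour degrees $d_{w}$ are distributed between the common and exclusive neighbourhoods, and the monotonicity statement by itself does not force the balance in the required direction for every admissible degree sequence. This is precisely the point at which the asserted strict inequality is liable to fail, and hence the place to search for a small explicit graph realizing $SO(\Gamma_{\alpha})\geq SO(\Gamma)$.
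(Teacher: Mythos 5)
Your instinct here is exactly right, and it matches what the paper actually does with this statement: Lemma~\ref{l3-1} is quoted from \cite{hare2023} only in order to be \emph{refuted}, not proved. The paper's ``proof'' is a counterexample, and your analysis correctly locates the reason one must exist. As you anticipated, the switching operation transfers adjacency between two distinguished vertices, the difference $SO(\Gamma_{\alpha})-SO(\Gamma)$ decomposes into summands of opposite sign, and the monotonicity available from Lemma~\ref{l2-2} does not force the balance in the direction claimed in \cite{hare2023}. The paper makes this precise by observing that the authors of \cite{hare2023} relied on the auxiliary function $f(x,y)=\sqrt{x^{2}+y^{2}}-\sqrt{x^{2}+2^{2}}-\sqrt{y^{2}+2^{2}}$ being increasing in $x$ (or $y$), whereas in fact
$$\frac{\partial f}{\partial x}=\frac{x}{\sqrt{x^{2}+y^{2}}}-\frac{x}{\sqrt{x^{2}+2^{2}}}<0,$$
so $f$ is strictly decreasing in each variable; the sign of the comparison therefore reverses once the base degrees are large. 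The only thing missing from your write-up is the explicit witness: the paper exhibits a graph with $d_{x_{1}}=d_{y_{1}}=8$ for which $SO(\Gamma)\approx 135.716<136.169\approx SO(\Gamma_{\alpha})$, and notes that the claimed inequality fails whenever $d_{x_{1}}\geq 8$ and $d_{y_{1}}\geq 8$. You should carry your last paragraph through to such a concrete example rather than stopping at the observation that one is ``liable'' to exist; otherwise your argument establishes only that the proposed proof strategy breaks down, not that the statement itself is false.
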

The conclusion of Lemma \ref{l3-1} of \cite{hare2023} is not entirely correct. A counterexamples of Lemma 3 of \cite{hare2023} can see Figure \ref{fig-31}. If $d_{x_{1}}= 8$ and $d_{y_{1}}= 8$, then
$SO(\Gamma)\approx 135.716<136.169\approx SO(\Gamma_{\alpha})$, which is a contradiction with the conclusion of Lemma \ref{l3-1} of \cite{hare2023}.
In fact, if $d_{x_{1}}\geq 8$ and $d_{y_{1}}\geq 8$, then the conclusion of Lemma \ref{l3-1} of \cite{hare2023} is wrong.

Let $x,y\geq 2$ and $f(x,y)=\sqrt{x^{2}+y^{2}}-\sqrt{x^{2}+2^{2}}-\sqrt{y^{2}+2^{2}}$.
Then $\frac{\partial f}{\partial x}=\frac{x}{\sqrt{x^{2}+y^{2}}}-\frac{x}{\sqrt{x^{2}+2^{2}}}<0$.
Thus $f(x,y)$ is is monotonically decreasing on $x$ or $y$.
However, in the proof of Lemma \ref{l3-1} of \cite{hare2023}, the authors think that $f(x,y)$ is is monotonically increasing on $x$ or $y$.

The conclusions of Lemma 4 and Lemma 5 of \cite{hare2023} is also not entirely correct. We can also find counterexamples. The main problem is that the author misunderstood the monotonicity of the above function $f(x,y)$.

\section{QSPR analysis with regression modeling and Sombor index}
\hskip 0.6cm
%The chemical applicability of Sombor indices can be found in \cite{dengt2021,lhiu2022,lich2021,lyhu2021,lyht2023,redz2021}.
The experimental values of enthalpie of combustion (resp. enthalpie of formation of liquid , enthalpie of sublimation, enthalpie of vaporization) of $19$ monocarboxylic acids of Table 1 are taken from \cite{shai2015}.

The regression modeling between Sombor index and enthalpie of combustion ($\Delta H_{c}^{o}$), enthalpie of formation of liquid ($\Delta H_{f}^{o}$), enthalpie of sublimation ($\Delta H_{sub}^{o}$), enthalpie of vaporization ($\Delta H_{vap}^{o}$) of $19$ monocarboxylic acids are, respectivily,

$$\Delta H_{c}^{o}= 229.7\times SO(G)-1263,\quad  \Delta H_{f}^{o}= 10.65\times SO(G)+369.2,$$
$$\Delta H_{sub}^{o}= 1.212\times SO(G)+36.41,\quad  \Delta H_{vap}^{o}= 2.559\times SO(G)+21.83.$$

By comparison, we find that the Sombor index exerts a better predictive capability than other vertex degree-based indices.

\begin{table}[h]
	\centering
    \caption{Experimental values of enthalpie of combustion ($\Delta H_{c}^{o} KJ/mol$), enthalpie of formation of liquid ($\Delta H_{f}^{o} KJ/mol$), enthalpie of sublimation ($\Delta H_{sub}^{o} KJ/mol$), enthalpie of vaporization ($\Delta H_{vap}^{o} KJ/mol$) and Sombor index of $19$ monocarboxylic acids}
    \setlength{\tabcolsep}{0.9mm}{
	\begin{tabular}{c|c|c|c|c|c}\hline
  Compounds     &  $\Delta H_{c}^{o} KJ/mol$  & $\Delta H_{f}^{o} KJ/mol$  &	$\Delta H_{sub}^{o} KJ/mol$  &	$\Delta H_{vap}^{o} KJ/mol$ &	$SO$    \\  \hline

	$Acetic acid$    &  $875.16$  &    $483.5$  &    $46.3$  &    $49.7$  &    $9.48683$  \\  \hline

    $Propanoic acid$    &  $1527.3$  &    $510.8$  &  $50.0$  &    $56.1$  &    $12.1662$  \\ \hline

    $Butanoic acid$    &  $2183.5$  &    $533.9$  &	  $54.9$  &    $62.9$  &    $14.9946$  \\ \hline

    $Pentanoic acid$    &  $2837.8$  &    $558.9$  &  $58.2$  &    $69.0$  &    $17.8230$  \\ \hline

    $Hexanoic acid$    &  $3494.3$  &    $581.8$  &	  $63.0$  &    $75.0$  &    $20.6515$   \\ \hline

    $Heptanoic acid$    &  $4146.9$  &    $608.5$  &  $64.8$  &    $81.7$  &    $23.4799$  \\ \hline

    $Octanoic acid$    &  $4799.9$  &    $634.8$  &	  $69.4$  &    $86.9$  &    $26.3083$  \\ \hline

    $Nonanoic acid$    &  $5456.1$  &    $658.0$  &	  $72.3$  &    $93.6$  &    $29.1367$  \\ \hline

    $Decanoic acid$    &  $6079.3$  &    $713.7$  &	  $76.3$  &    $100.8$  &    $31.9652$  \\ \hline

    $Undecanoic acid$   &  $6736.5$  &    $736.2$  &  $78.9$  &    $106.7$  &    $34.7936$  \\ \hline

    $Dodecanoic acid$   &  $7333.0$  &    $775.1$  &  $82.2$  &    $115.9$  &    $37.6220$   \\ \hline

    $Tridecanoic acid$   &  $8024.2$  &    $807.2$  &  $84.9$  &    $121.2$  &    $40.4504$  \\ \hline

    $Tetradecanoic acid$   &  $8676.7$  &    $834.1$  &	  $87.7$  &    $130.2$  &    $43.2789$  \\ \hline

    $Pentadecanoic acid$   &  $9327.7$  &    $862.4$  &	  $91.4$  &    $136.5$  &    $46.1073$  \\ \hline

    $Hexadecanoic acid$   &  $9977.2$  &    $892.2$  &	  $94.5$  &    $144.3$  &    $48.9357$  \\ \hline

    $Heptadecanoic acid$   &  $10624.4$  &    $924.4$  &  $100.7$  &    $159.6$  &    $51.7642$  \\ \hline

    $Octadecanoic acid$   &  $11280.1$  &    $947.2$  &	  $102.8$  &    $164.7$  &    $54.5926$  \\ \hline

    $Nonadecanoic acid$   &  $11923.4$  &    $984.1$  &	  $105.0$  &    $172.9$  &    $57.4210$  \\ \hline

    $Eicosanoic acid$   &  $12574.2$  &    $1012.6$  &	  $109.9$  &    $179.2$  &    $60.2494$  \\ \hline

	\end{tabular}}
	
	\label{table1}
\end{table}

\begin{table}[!htb]
	\centering
    \caption{The $R^{2}$ and $RMSE$ of regression modeling between Sombor index and enthalpie of combustion ($\Delta H_{c}^{o} KJ/mol$), enthalpie of formation of liquid ($\Delta H_{f}^{o} KJ/mol$), enthalpie of sublimation ($\Delta H_{sub}^{o} KJ/mol$), enthalpie of vaporization ($\Delta H_{vap}^{o} KJ/mol$) of $19$ monocarboxylic acids.}
     \setlength{\tabcolsep}{0.7mm}{
	\begin{tabular}{c|ccc}\hline
	Physico-chemical property           &	   $R^{2}$  &      $RMSE$          \\ \hline

	enthalpie of combustion             &     $0.99998$    &	  $17.987$     \\ \hline

    enthalpie of formation of liquid    &     $0.99737$    &	  $8.9567$     \\ \hline

    enthalpie of vaporization           &     $0.99745$    &	  $1.0034$     \\ \hline

    enthalpie of vaporization           &     $0.99355$    &	  $3.2771$     \\ \hline
	\end{tabular}}
	
	\label{table2}
\end{table}

\baselineskip=0.20in

\end{document}